\documentclass[11pt]{article}

\usepackage[T1]{fontenc}
\usepackage{lmodern}
\usepackage[margin=1in]{geometry}
\usepackage{amsmath,amssymb,amsthm}
\usepackage{microtype}
\usepackage{xcolor}
\definecolor{refblue}{RGB}{0,70,170}
\definecolor{citegreen}{RGB}{0,120,70}
\usepackage[
  colorlinks=true,
  linkcolor=refblue,
  citecolor=citegreen,
  urlcolor=black
]{hyperref}

\newtheorem{theorem}{Theorem}[section]
\newtheorem{lemma}[theorem]{Lemma}

\numberwithin{equation}{section}

\newcommand{\R}{\mathbb{R}}
\newcommand{\E}{\mathbb{E}}
\newcommand{\Pcal}{\mathcal{P}}
\newcommand{\Scal}{\mathcal{S}}

\title{The exact dimensional threshold for Spearman rank-correlation compatibility}
\author{%
Ruodu Wang\\
{\small Department of Statistics and Actuarial Science}\\
{\small University of Waterloo}\\
{\small \href{mailto:wang@uwaterloo.ca}{\texttt{wang@uwaterloo.ca}}}
\and
Zhenyuan Zhang\\
{\small Department of Mathematics}\\
{\small Stanford University}\\
{\small \href{mailto:zzy@stanford.edu}{\texttt{zzy@stanford.edu}}}
}
\date{}

\begin{document}

\maketitle

\begin{abstract}
We show that, for a given dimension, the set of Spearman's rank correlation matrices 
and that of linear correlation matrices coincide if and only if the dimension is no larger than nine. 
For this, we construct an extreme rank-four counterexample in dimension ten and prove its incompatibility using  moment identities and Cauchy–Schwarz. Appending unit directions produces   counterexamples in every higher dimension. This, together with existing results, completes the dimensional classification and  settles a long-standing open question in quantitative risk management. \medskip

\noindent \textbf{Keywords:} Linear correlation; positive semi-definite matrices; dependence; quantitative risk management
\end{abstract}

\section{Introduction}

The paper of Embrechts, McNeil, and Straumann \cite{EMS2002}
is a celebrated piece in  quantitative risk management (QRM), and it has been a constant source of inspiration for scientific  developments in the field over the past three decades (an early version of \cite{EMS2002} was dated 1999); many subsequent developments are included in the standard textbook \cite{MFE15}. 
Among other important insights, the authors of \cite{EMS2002} explicitly asked the following question in their Section~6.2 (and provided a positive answer in dimension $2$):
\begin{equation}
    \label{eq:main-Q}
    \mbox{Is every linear correlation matrix necessarily a Spearman's rank correlation matrix?}
\end{equation}

In this paper, we provide a full answer to \eqref{eq:main-Q} in every dimension. To explain this question and its relevance to QRM, 
we note that the dependence structure in a multivariate probabilistic model 
is a crucially important part of  quantitative modeling in QRM.
For tractability in applications, the dependence structure is often  summarized by a matrix of
pairwise association measures.  When the entries are Pearson's linear correlations,
the admissible matrices are exactly the real symmetric positive semidefinite
matrices with unit diagonal. 
When the entries are Spearman's rank correlations, which have many advantages over the linear correlations,
such a matrix 
becomes more delicate. 
It is straightforward to check that a Spearman's rank correlation matrix
must be a linear correlation matrix, and the question in \eqref{eq:main-Q} asks the converse question, which is much more difficult. 
An answer to \eqref{eq:main-Q} has useful implications in QRM. For instance, 
it tells us whether an empirical estimator of a rank correlation matrix  is guaranteed to be a valid one.

Let $d$ be  a positive integer, which is the underlying dimension of the problem.
A Spearman's rank correlation matrix is the linear correlation matrix of a random vector with uniform marginal distributions, and the set of $d\times d$ Spearman's rank correlation matrices is  denoted by $\mathcal S_d$. 
Indeed, if $(X_1,\ldots,X_d)$ has continuous marginal distribution
functions $F_1,\ldots,F_d$, then
\[
Z_i=\sqrt{12}\left(F_i(X_i)-\frac{1}{2}\right)
\]
is uniformly distributed on $[-\sqrt{3},\sqrt{3}]$, and the Spearman correlation of
$X_i$ and $X_j$ equals $\E[Z_iZ_j]$.  Hence, $R\in\Scal_d$ if and only if
$R$ is the covariance matrix of a random vector whose marginal
distributions are all uniform on $[-\sqrt{3},\sqrt{3}]$.  
 Let $\Pcal_d$ be the set of real symmetric
positive semidefinite $d\times d$ matrices with unit diagonal (i.e.,~linear correlation matrices). 
The compatibility question \eqref{eq:main-Q} is whether $\Scal_d=\Pcal_d$.

The answer to \eqref{eq:main-Q} was known to be affirmative in low dimensions.  For $d=2$, it was shown by a simple construction in  \cite{EMS2002}. 
As a major nontrivial result,  
Devroye and
Letac \cite{DL2015} proved that $\Scal_d=\Pcal_d$ for $d\leq9$.  A key
ingredient is the geometry of the extreme points of $\Pcal_d$: the results
of Ycart \cite{Ycart1985} and Grone, Pierce, and Watkins \cite{GPW1990}
imply that all such extreme points have rank at most three when $d\leq9$.
The uniform distribution on the sphere in $\R^3$ has uniform
one-dimensional projections, and its rotational invariance therefore
realizes every correlation matrix of rank at most three.  Convexity then
extends the conclusion to all of $\Pcal_d$.  The threshold $d=10$ is the
first dimension in which an extreme correlation matrix may have rank four,
and the spherical construction no longer applies.

Devroye and
Letac \cite{DL2015} 
conjectured that $\Scal_d=\Pcal_d$  fails 
for $d\ge 10$, although they did not find a proof. 
A first negative answer was provided by Wang, Wang, and Wang  \cite{WWW2019}, who showed   $\Scal_d\neq\Pcal_d$ for every $d\geq12$. 
They 
developed a rank-decomposition characterization that
makes the above obstruction precise.  If $R=AA^{\mathsf T}$ has
rank $k$, then $R$ is Spearman-compatible exactly when there exists a
centered random vector $\mathbf{V}\in\R^k$ with identity covariance such that every
component of $A\mathbf{V}$ is uniform on $[-\sqrt{3},\sqrt{3}]$.  Using twelve
carefully chosen directions in $\R^4$, they showed that no such $\mathbf{V}$ exists
for a particular $12\times12$ correlation matrix.   Their contradiction relies on
the symmetry of the twelve directions, which converts their second- and
fourth-power sums into functions of $\lVert \mathbf{V}\rVert$ alone.  They noted
that this symmetry is unavailable with ten or eleven directions and left
those two dimensions open; they also explicitly anticipated that $\Scal_d=\Pcal_d$ fails for $d=10$ or $11$.
In a more recent paper, McNeil, Ne\v{s}lehov\'a, and Smith \cite{MNS2022} noted that whether $\Scal_d=\Pcal_d$  for $d=10,11$ remained open. They solved an analogous problem for Kendall's tau, another commonly used coefficient of rank correlation, 
by showing that the Kendall's tau
matrices coincide with linear correlation matrices
if and only if $d\le2$.

In this paper, we settle the remaining cases of Spearman compatibility in full.  We construct an explicit
rank-four matrix $R_{10}\in\Pcal_{10}$ and prove that it is not
Spearman-compatible.  Appending an identity block then gives
counterexamples in every larger dimension, including dimension eleven.
Combined with the positive result for $d\leq9$, this completes the
dimensional classification.

\begin{theorem}\label{thm:classification}
For every positive integer $d$, one has $\Scal_d=\Pcal_d$ if and only if
$d\leq 9$.
\end{theorem}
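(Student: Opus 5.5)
The ``if'' direction is the theorem of Devroye and Letac \cite{DL2015}, which we take as given: for $d\le 9$ every extreme point of $\Pcal_d$ has rank at most three, so it is realized by projections of the uniform distribution on the sphere in $\R^3$, and convexity of $\Scal_d$ handles the rest. Everything therefore rests on the ``only if'' direction, and that reduces to exhibiting one matrix $R_{10}\in\Pcal_{10}\setminus\Scal_{10}$. The reduction works as follows. If $R_{10}\notin\Scal_{10}$, then $R_d=\mathrm{diag}(R_{10},I_{d-10})\in\Pcal_d$. Moreover, $R_d\in\Scal_d$ would force its leading $10\times10$ block, which is the covariance matrix of the first ten uniform coordinates, to lie in $\Scal_{10}$. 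So $\Scal_d\ne\Pcal_d$ for every $d\ge 10$.

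To build $R_{10}$, I will choose ten unit vectors $a_1,\dots,a_{10}\in\R^4$ and set $R_{10}=AA^{\mathsf T}$ with rows $a_i$. The vectors should have two properties. First, they should span $\R^4$. Second, the ten symmetric matrices $a_ia_i^{\mathsf T}$ should be linearly independent in the $10$-dimensional space of symmetric $4\times4$ matrices. The second property makes $R_{10}$ an extreme point of $\Pcal_{10}$ of rank four, by the Grone--Pierce--Watkins criterion \cite{GPW1990}. By the Wang--Wang--Wang characterization \cite{WWW2019}, $R_{10}\in\Scal_{10}$ if and only if there is a centered $\mathbf V\in\R^4$ with $\E[\mathbf V\mathbf V^{\mathsf T}]=I_4$ such that every $a_i^{\mathsf T}\mathbf V$ is uniform on $[-\sqrt3,\sqrt3]$.

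Suppose such a $\mathbf V$ exists. Then $a_i^{\mathsf T}\mathbf V$ has all its moments fixed: $\E[(a_i^{\mathsf T}\mathbf V)^2]=1$, $\E[(a_i^{\mathsf T}\mathbf V)^4]=9/5$, $\E[(a_i^{\mathsf T}\mathbf V)^6]=27/7$, and so on. The number of directions, ten, equals the dimension of the space of quadratic forms on $\R^4$. So the forms $(a_i^{\mathsf T}x)^2$ form a basis of that space, and in particular $\|x\|^2=\sum_i c_i(a_i^{\mathsf T}x)^2$ for unique coefficients $c_i$. The plan is to pick the $a_i$ with enough symmetry that the $c_i$ are explicit. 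One candidate is a subset of a $D_4$/$24$-cell root configuration; another is vectors of the form $(\pm1,\pm1,0,0)/\sqrt2$ together with some coordinate vectors. Multiplying that identity by $(a_j^{\mathsf T}x)^2$ expresses $\E[\|\mathbf V\|^2(a_j^{\mathsf T}\mathbf V)^2]$ in terms of mixed fourth moments $\E[(a_i^{\mathsf T}\mathbf V)^2(a_j^{\mathsf T}\mathbf V)^2]$. These mixed moments are in turn constrained by the pointwise inequality $\bigl|a_i^{\mathsf T}\mathbf V\bigr|\le\sqrt3$, because the marginals are supported on $[-\sqrt3,\sqrt3]$. From the same identity one also gets
\[
\E\|\mathbf V\|^4=\sum_{i,j}c_ic_j\,\E[(a_i^{\mathsf T}\mathbf V)^2(a_j^{\mathsf T}\mathbf V)^2],
\]
and Cauchy--Schwarz gives $\E[(a_i^{\mathsf T}\mathbf V)^2(a_j^{\mathsf T}\mathbf V)^2]\le 9/5$. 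I will also use $\E\|\mathbf V\|^4\ge(\E\|\mathbf V\|^2)^2=16$. The aim is to choose the configuration so that the coefficients $c_i$ are signed in a way that makes these bounds incompatible.

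If fourth moments do not suffice, I will use the support bound in a second way. Since $\|\mathbf V\|^2$ is a fixed combination of the $(a_i^{\mathsf T}\mathbf V)^2$, the pointwise bound $\|\mathbf V\|^2\le 3\sum_{c_i>0}c_i$ holds. Combined with the identity $\E\|\mathbf V\|^2=4$ and with the equality cases of Cauchy--Schwarz, this would force $\mathbf V$ to concentrate on a specific set. On that set some marginal $a_j^{\mathsf T}\mathbf V$ would then be unable to be uniform, which can be detected by comparing its second and fourth moments (or its sixth moment).

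The main obstacle is choosing the ten directions so that the moment and Cauchy--Schwarz inequalities actually close up with a strict contradiction. Wang--Wang--Wang used twelve directions precisely to get rotational symmetry, and with ten that symmetry is lost. I expect to search over partial root-system configurations, computing the dual coefficients $c_i$ for each, and looking for a case where a negative $c_i$ makes the fourth-moment identity contradict the Cauchy--Schwarz bound.
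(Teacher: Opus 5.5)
Your two reductions are correct and are exactly the paper's. The case $d\le 9$ is the Devroye--Letac theorem, and $\operatorname{diag}(R_{10},I_{d-10})$ transports a $10\times10$ counterexample to every $d\ge 10$.

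The gap is that the entire content of the ``only if'' direction is left to a future search. That content is an explicit $R_{10}\in\Pcal_{10}\setminus\Scal_{10}$ together with a proof that no admissible $\mathbf V$ exists. Moreover, the mechanism you propose for that search cannot succeed for any choice of directions. Since $\sum_i c_i a_ia_i^{\mathsf T}=I_4$, taking traces gives $\sum_i c_i=4$. Now set every $M_{ij}=\E[(a_i^{\mathsf T}\mathbf V)^2(a_j^{\mathsf T}\mathbf V)^2]$ equal to $9/5$. This is consistent with all the inequalities you list, because it gives $\sum_{i,j}c_ic_jM_{ij}=\frac95\bigl(\sum_i c_i\bigr)^2=\frac{144}{5}>16$, whatever the signs of the $c_i$. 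Entrywise bounds discard the linear relations among the $M_{ij}$, which are all linear functions of the fourth-moment tensor of $\mathbf V$, and any contradiction has to come from those relations. Your fallback, a pointwise bound on $\|\mathbf V\|^2$ combined with ``equality cases of Cauchy--Schwarz'', has no argument forcing equality anywhere.

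The paper supplies the missing construction and argument as follows.
\begin{itemize}
\item It takes the four tetrahedral directions $(0,s_1,s_2,s_3)/\sqrt3$ with $s_1s_2s_3=1$ and the six directions $(\mathbf e_1\pm\sqrt2\,\mathbf e_{j+1})/\sqrt3$, $j=1,2,3$.
\item It symmetrizes $\mathbf V=(X,Y_1,Y_2,Y_3)$ under the group of matrices $\operatorname{diag}(\eta,P)$ with $P$ a signed permutation. This group maps the ten directions to themselves up to sign, so odd moments vanish and only four fourth moments survive.
\item The uniform fourth moment $9/5$ then gives two exact identities: $\E[Y_1^4]+6\E[Y_1^2Y_2^2]=27/5$ and $\E[X^4]+12\E[X^2Y_1^2]+4\E[Y_1^4]=81/5$.
\item With $T=(Y_1^2+Y_2^2+Y_3^2)/\sqrt3$, these yield $\operatorname{Cov}(X^2,T)=-\frac{\sqrt3}{12}\bigl(\E[X^4]+3+6\operatorname{Var}(T)\bigr)$.
\item The Cauchy--Schwarz bound $\operatorname{Cov}(X^2,T)^2\le\operatorname{Var}(X^2)\operatorname{Var}(T)$ then fails, because $(p+q+4)^2>8pq$ for $p=\E[X^4]-1\ge0$, $q=6\operatorname{Var}(T)\ge0$ and $p+q\le8$. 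The last condition comes from $\E[X^2Y_1^2]\ge0$.
\end{itemize}
In this configuration all your dual coefficients are positive: $c_i=1/2$ on the six coupling directions and $c_i=1/4$ on the tetrahedral ones. So the sign pattern you planned to search for is not what produces the contradiction. What produces it is using the fourth-moment equalities as exact relations among a few coordinate moments, and then testing a covariance inequality between two well-chosen quadratic functions of $\mathbf V$.
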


The construction replaces the global symmetry used in the
twelve-dimensional example by two complementary families of directions.
Four directions form a regular tetrahedron in the last three coordinates,
and six directions occur in three pairs coupling the first coordinate to
one of the other coordinates.  If the corresponding matrix
$R_{10}=CC^{\mathsf T}$ were Spearman-compatible, the rank-decomposition
criterion would produce a centered four-dimensional random vector with ten
specified uniform projections.  Symmetrizing its law eliminates mixed
moments containing an odd power.  The fourth moments along the tetrahedral
directions then determine the relevant moments of the last three
coordinates, while the paired directions link these moments to the first
coordinate.  The resulting identities violate Cauchy--Schwarz.  Thus, in
contrast to the earlier twelve-direction argument, a fully spherical
fourth-moment identity is not needed.

Section~\ref{sec:construction} defines the ten directions, the matrix $C$,
and the resulting correlation matrix $R_{10}=CC^{\mathsf T}$.
Section~\ref{sec:noncompatibility} recalls the uniform-marginal reduction,
proves that this matrix is not Spearman-compatible, and completes the
proof of Theorem~\ref{thm:classification}, including the $11\times11$
case.

\section{Construction of the 10-by-10 counterexample}
\label{sec:construction}
In this section, we construct an explicit matrix
$R_{10}\in\Pcal_{10}$; Theorem~\ref{thm:d10} below will show that
$R_{10}\notin\Scal_{10}$.  Let \(\mathbf{e}_1,\ldots,\mathbf{e}_4\) denote the standard basis vectors of \(\mathbb R^4\). We say that 
$\mathbf{s}=(s_1,s_2,s_3)\in\{-1,1\}^3$ is \textit{admissible} if $s_1s_2s_3=1$.  For each
admissible $\mathbf{s}$, and for $j\in\{1,2,3\}$ and
$\varepsilon\in\{-1,1\}$, consider the unit vectors
\begin{equation}\label{eq:directions}
\mathbf{a}_{\mathbf{s}}=\frac{1}{\sqrt{3}}(0,s_1,s_2,s_3),
\qquad
\mathbf{b}_{j,\varepsilon}
=\frac{1}{\sqrt{3}}\mathbf{e}_1
+\varepsilon\sqrt{\frac{2}{3}}\,\mathbf{e}_{j+1}.
\end{equation}
 Let $C$ be the
$10\times4$ matrix whose rows are the four vectors
$\mathbf{a}_{\mathbf{s}}^{\mathsf T}$ followed by the six vectors
$\mathbf{b}_{j,\varepsilon}^{\mathsf T}$:
\begin{equation}\label{eq:C-explicit}
C=\frac{1}{\sqrt{3}}
\begin{pmatrix}
0& 1& 1& 1\\
0& 1&-1&-1\\
0&-1& 1&-1\\
0&-1&-1& 1\\
1& \sqrt{2}&0&0\\
1&-\sqrt{2}&0&0\\
1&0& \sqrt{2}&0\\
1&0&-\sqrt{2}&0\\
1&0&0& \sqrt{2}\\
1&0&0&-\sqrt{2}
\end{pmatrix}.
\end{equation}
Clearly, $\operatorname{rank}(C)=4$. Define the $10\times10$ matrix
\begin{equation}\label{eq:R10}
R_{10}=CC^{\mathsf T}.
\end{equation}
For every $\mathbf{x}\in\R^{10}$,
$\mathbf{x}^{\mathsf T}R_{10}\mathbf{x}=\|C^{\mathsf T}\mathbf{x}\|^2\geq0$, and the diagonal
entries of $R_{10}$ are one because the rows of $C$ are unit vectors.
Thus, $R_{10}\in\Pcal_{10}$.  Moreover,
$\ker(R_{10})=\ker(C^{\mathsf T})$, so
$\operatorname{rank}(R_{10})=\operatorname{rank}(C)=4$.

\section{Failure of Spearman compatibility}
\label{sec:noncompatibility}

We first record the characterization of the set $\Scal_d$ of  Wang, Wang, and
Wang \cite[Theorem~2.2]{WWW2019}.

\begin{lemma}\label{lem:reduction}
Suppose that $C\in\R^{d\times k}$ has rank $k$, every row
$\mathbf{c}_i^{\mathsf T}$ of $C$ is a unit vector, and $R=CC^{\mathsf T}$.
Then $R\in\Scal_d$ if and only if there is a random vector
$\mathbf{V}\in\R^k$ such that
$$
\E[\mathbf{V}]=0,\qquad \operatorname{Cov}(\mathbf{V})=I_k,\qquad
\mathbf{c}_i^{\mathsf T}\mathbf{V}\sim\operatorname{Unif}[-\sqrt{3},\sqrt{3}]
$$
for every $i\in\{1,\ldots,d\}$.
\end{lemma}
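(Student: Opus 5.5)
We prove the two directions separately. Throughout, we use the description of $\Scal_d$ from the introduction: $R\in\Scal_d$ if and only if $R=\operatorname{Cov}(\mathbf{Z})$ for some random vector $\mathbf{Z}=(Z_1,\ldots,Z_d)$ with every $Z_i\sim\operatorname{Unif}[-\sqrt{3},\sqrt{3}]$. Such a $\mathbf{Z}$ is automatically centered, and each $Z_i$ has unit variance.

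\emph{Sufficiency.} Suppose $\mathbf{V}$ is as in the statement and set $\mathbf{Z}=C\mathbf{V}$. Each component $Z_i=\mathbf{c}_i^{\mathsf T}\mathbf{V}$ is uniform on $[-\sqrt{3},\sqrt{3}]$ by assumption. Moreover,
\[
\operatorname{Cov}(\mathbf{Z})=C\operatorname{Cov}(\mathbf{V})C^{\mathsf T}=CC^{\mathsf T}=R.
\]
Hence $R\in\Scal_d$. This direction uses neither the rank hypothesis nor the unit-row hypothesis.

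\emph{Necessity.} Suppose $R=\operatorname{Cov}(\mathbf{Z})$ with uniform marginals as above. We need to produce $\mathbf{V}\in\R^k$ with $C\mathbf{V}=\mathbf{Z}$.

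\begin{enumerate}
\item Since $C$ has full column rank $k$, the matrix $C^{\mathsf T}C$ is invertible. Let $C^{+}=(C^{\mathsf T}C)^{-1}C^{\mathsf T}$ be the left inverse, so $C^{+}C=I_k$, and define $\mathbf{V}=C^{+}\mathbf{Z}$.
\item $\mathbf{V}$ has the required first two moments:
\[
\E[\mathbf{V}]=0,\qquad \operatorname{Cov}(\mathbf{V})=C^{+}CC^{\mathsf T}(C^{+})^{\mathsf T}=I_kI_k=I_k.
\]
\item It remains to show $C\mathbf{V}=\mathbf{Z}$ almost surely. The matrix $P=CC^{+}$ is the orthogonal projection onto $\operatorname{col}(C)=\operatorname{col}(R)$. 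Because $\operatorname{Cov}(\mathbf{Z})=R$, for every $\mathbf{x}\in\ker(R)=\operatorname{col}(C)^{\perp}$ we have $\operatorname{Var}(\mathbf{x}^{\mathsf T}\mathbf{Z})=\mathbf{x}^{\mathsf T}R\mathbf{x}=0$. Since $\mathbf{Z}$ is centered, this gives $\mathbf{x}^{\mathsf T}\mathbf{Z}=0$ almost surely.
\item Apply this to a finite basis of $\ker(R)$, which is a countable family of null events. It follows that $\mathbf{Z}\in\operatorname{col}(C)$ almost surely, so $P\mathbf{Z}=\mathbf{Z}$.
\item Therefore $C\mathbf{V}=CC^{+}\mathbf{Z}=P\mathbf{Z}=\mathbf{Z}$, and each $\mathbf{c}_i^{\mathsf T}\mathbf{V}=Z_i$ is uniform on $[-\sqrt{3},\sqrt{3}]$.
\end{enumerate}

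The only step requiring care is step 3–4, showing that $\mathbf{Z}$ lies almost surely in the column space of $C$. It follows from the fact that a centered variable with zero variance vanishes almost surely. The unit-row hypothesis is consistent with the unit diagonal of $R$ but is not otherwise used.
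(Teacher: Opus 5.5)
Your proof is correct and complete. Sufficiency is the direct computation $\operatorname{Cov}(C\mathbf{V})=CC^{\mathsf T}$. For necessity, the left inverse $\mathbf{V}=(C^{\mathsf T}C)^{-1}C^{\mathsf T}\mathbf{Z}$ works because $\mathbf{x}^{\mathsf T}\mathbf{Z}$ has mean and variance zero for every $\mathbf{x}\in\ker(R)=\operatorname{col}(C)^{\perp}$, which forces $\mathbf{Z}\in\operatorname{col}(C)$ almost surely and hence $C\mathbf{V}=\mathbf{Z}$.

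The paper gives no proof of its own here; it simply quotes \cite[Theorem~2.2]{WWW2019}, so there is nothing in the paper to compare against. You could add that the unit-row hypothesis is in fact forced in both directions, since $\|\mathbf{c}_i\|^2=\operatorname{Var}(\mathbf{c}_i^{\mathsf T}\mathbf{V})=1$.
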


\begin{theorem}\label{thm:d10}
The matrix $R_{10}$ defined in \eqref{eq:R10} does not belong to
$\Scal_{10}$.
\end{theorem}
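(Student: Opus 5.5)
We argue by contradiction using Lemma~\ref{lem:reduction} with the matrix $C$ of \eqref{eq:C-explicit}. Since $C$ has rank $4$ and unit rows, $R_{10}\in\Scal_{10}$ would give a random vector $\mathbf{V}=(V_1,V_2,V_3,V_4)$ with $\E[\mathbf{V}]=0$ and $\operatorname{Cov}(\mathbf{V})=I_4$. Moreover, each projection $\mathbf{a}_{\mathbf{s}}^{\mathsf T}\mathbf{V}$ and $\mathbf{b}_{j,\varepsilon}^{\mathsf T}\mathbf{V}$ would be uniform on $[-\sqrt3,\sqrt3]$.

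\textbf{Symmetrization.} Let $\eta_1,\dots,\eta_4$ be independent Rademacher signs, independent of $\mathbf{V}$, and replace $\mathbf{V}$ by $(\eta_1V_1,\dots,\eta_4V_4)$. Each sign flip of a coordinate maps the set of ten directions into itself up to an overall sign. For example, flipping $e_2$ sends $\mathbf{a}_{\mathbf{s}}$ to $-\mathbf{a}_{\mathbf{s}'}$ with $\mathbf{s}'$ admissible. Likewise, flipping $e_1$ sends $\mathbf{b}_{j,\varepsilon}$ to $-\mathbf{b}_{j,-\varepsilon}$. Because the uniform law on $[-\sqrt3,\sqrt3]$ is symmetric, the mixture still has all ten uniform projections, mean zero and identity covariance. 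After this step, every mixed moment $\E[V_1^{k_1}V_2^{k_2}V_3^{k_3}V_4^{k_4}]$ with some $k_i$ odd vanishes. Moreover, the support constraints $|\mathbf{c}_i^{\mathsf T}\mathbf{V}|\le\sqrt3$ hold for every sign pattern, so
\[
|V_2|+|V_3|+|V_4|\le 3
\quad\text{and}\quad
|V_1|+\sqrt2|V_k|\le 3
\quad\text{a.s.}
\]

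\textbf{Moment identities.} Write $m_i=\E V_i^4$ and $p_{ij}=\E V_i^2V_j^2$, and use $\E U^2=1$, $\E U^4=9/5$, $\E U^6=27/7$ for $U\sim\operatorname{Unif}[-\sqrt3,\sqrt3]$. Expanding the fourth power of a tetrahedral projection, all odd terms drop and we get
\[
m_2+m_3+m_4+6(p_{23}+p_{24}+p_{34})=\tfrac{81}{5}.
\]
Expanding the fourth power of a paired projection gives, for $k=2,3,4$,
\[
m_1+12p_{1k}+4m_k=\tfrac{81}{5}.
\]
If needed, I would also record the analogous sixth-moment identities. These would involve $\E V_i^6$, $\E V_i^4V_j^2$ and $\E V_2^2V_3^2V_4^2$, and would be used in the same way to pin down further combinations.

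\textbf{Contradiction via Cauchy--Schwarz.} I would first apply Cauchy--Schwarz in the forms
\[
p_{1k}\le\sqrt{m_1m_k},\qquad m_i\ge(\E V_i^2)^2=1,
\]
together with
\[
\E\bigl[(V_2^2+V_3^2+V_4^2)^2\bigr]\ge\bigl(\E[V_2^2+V_3^2+V_4^2]\bigr)^2=9.
\]
I would then combine these with the support bounds above, which control quantities such as $\E[(V_2^2+V_3^2+V_4^2)^2]$ from above. The paired identities express $p_{1k}$ through $m_1$ and $m_k$, and the tetrahedral identity links $\sum m_k$ to $\sum p_{jk}$. The aim is to produce one weighted combination in which Cauchy--Schwarz forces a quantity to be strictly larger than the moment identities allow. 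A natural candidate is $\E[V_1^2(V_2^2+V_3^2+V_4^2)]$ paired against $\E V_1^4$ and $\E[(V_2^2+V_3^2+V_4^2)^2]$.

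The main obstacle is this last step, namely choosing the right combination. The fourth-moment identities alone leave a free parameter family, for instance with all $m_k$ equal, so one must feed in the right extra inequality. That extra input could be a support bound, or a sixth-moment identity, or Cauchy--Schwarz applied to a cleverly chosen product such as $V_1^2$ against $V_2^2+V_3^2+V_4^2-c$. I would try the fourth-moment Cauchy--Schwarz route first. If that fails, I would then add the sixth-moment paired identities, which in the equal-$m_k$ reduction become a small linear system.

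Once Theorem~\ref{thm:d10} is established, Theorem~\ref{thm:classification} follows by appending an identity block to $R_{10}$.
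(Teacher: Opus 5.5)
Your reduction is sound. The coordinatewise sign symmetrization preserves all ten uniform projections, and both fourth-moment identities are correct. Summing the paired identities over $k$ even removes the need for the permutation symmetry that the paper also imposes. The gap is that the proof stops exactly where the contradiction must be produced, and none of the inequalities you actually list produces one. Put $S=V_2^2+V_3^2+V_4^2$. The values $m_i=9/5$, $p_{ij}=3/5$ satisfy both identities and also $p_{1k}\le\sqrt{m_1m_k}$, $m_i\ge1$ and $\E[S^2]\ge9$. Your ``natural candidate'' $\E[V_1^2S]^2\le\E[V_1^4]\,\E[S^2]$ is also compatible with the identities: at $\E V_1^4=1$, $\operatorname{Var}(S)=0$ it reads $4\le9$. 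The same holds for $V_1^2$ against $S-3$: at $\E V_1^4=5$, $\operatorname{Var}(S)=2$ it reads $9\le10$. Support bounds and sixth moments are not needed. What is missing is centering \emph{both} factors in Cauchy--Schwarz.

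Concretely, put $\mu=\E V_1^4$ and $v=\operatorname{Var}(S)$. The tetrahedral identity gives $\E[S^2]=\tfrac23\sum_k m_k+\tfrac{27}{5}$, so $\sum_k m_k=\tfrac32 v+\tfrac{27}{5}$. Summing the paired identities over $k$ then yields $\E[V_1^2S]=(9-\mu-2v)/4$, hence
\[
\operatorname{Cov}(V_1^2,S)=-\tfrac14\,(\mu+3+2v).
\]
Set $p=\mu-1\ge0$ and $q=2v\ge0$. The inequality $\operatorname{Cov}(V_1^2,S)^2\le\operatorname{Var}(V_1^2)\operatorname{Var}(S)=(\mu-1)v$ becomes $(p+q+4)^2\le 8pq$. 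Separately, $\E[V_1^2S]\ge0$ gives $p+q\le8$. Since $8pq\le2(p+q)^2<(p+q+4)^2$ whenever $0\le p+q\le8$, this is the contradiction. It is precisely the paper's argument with $T=S/\sqrt3$.
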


\begin{proof}
Assume for contradiction that $R_{10}\in\Scal_{10}$.
Lemma~\ref{lem:reduction} then gives a random vector
$\mathbf{V}=(X,Y_1,Y_2,Y_3)^{\mathsf T}$ satisfying
\begin{equation}\label{eq:V-properties}
\E[\mathbf{V}]=0,\qquad \operatorname{Cov}(\mathbf{V})=I_4,
\end{equation}
such that the projections of $\mathbf{V}$ along all ten vectors in
\eqref{eq:directions} are uniform on
$[-\sqrt{3},\sqrt{3}]$. We split the rest of the proof into three steps.

\medskip
\noindent\emph{Step 1: Reducing to the symmetric case.}
Let $G$ be the finite group of matrices
$h=\operatorname{diag}(\eta,P)$, where $\eta\in\{-1,1\}$ and $P$ is a
signed $3\times3$ permutation matrix, meaning that each row and each
column has exactly one nonzero entry and that entry is $1$ or $-1$.
Observe that if $h\in G$,  $h\mathbf{V}$ satisfies \eqref{eq:V-properties} and the same ten uniform
projection constraints as $\mathbf{V}$.  By replacing $\mathbf{V}$ with $H\mathbf{V}$, where $H$
is uniform on $G$ and independent of $\mathbf{V}$, we may assume that
the law of $\mathbf{V}$ is $G$-invariant.

Since $G$ contains every permutation of $Y_1,Y_2,Y_3$, the variables $Y_j$ are
exchangeable. Moreover, $G$ contains each marginal sign change, so the expectation of any monomial in $X,Y_1,Y_2,Y_3$ containing an odd
power of at least one coordinate is zero.

\medskip
\noindent\emph{Step 2: Moment identities and inequalities.}
A random variable $U\sim\operatorname{Unif}[-\sqrt{3},\sqrt{3}]$ satisfies
$\E[U^4]=9/5$.  For every admissible $\mathbf{s}=(s_1,s_2,s_3)$, Step 1 therefore gives
\begin{equation}\label{eq:a-fourth}
\frac{27}{5}
=3\E[(\mathbf{a}_{\mathbf{s}}^{\mathsf T}\mathbf{V})^4]
=\frac{1}{3}\E[(s_1Y_1+s_2Y_2+s_3Y_3)^4]
=\E[Y_1^4]+6\E[Y_1^2Y_2^2].
\end{equation}
Similarly, for $\varepsilon\in\{\pm 1\}$,
\begin{equation}\label{eq:b-fourth}
\frac{81}{5}
=9\E[(\mathbf{b}_{1,\varepsilon}^{\mathsf T}\mathbf{V})^4]
=\E[(X+\varepsilon\sqrt{2}Y_1)^4]
=\E[X^4]+12\E[X^2Y_1^2]+4\E[Y_1^4].
\end{equation}
Set
$T=(Y_1^2+Y_2^2+Y_3^2)/\sqrt{3}$.  By \eqref{eq:V-properties},
$\E[T]=\sqrt{3}$, so exchangeability
gives
\begin{equation}\label{eq:T-variance}
\operatorname{Var}(T)
=\E[T^2]-\E[T]^2
=\frac{1}{3}\E[(Y_1^2+Y_2^2+Y_3^2)^2]-3
=\E[Y_1^4]+2\E[Y_1^2Y_2^2]-3.
\end{equation}
Combining \eqref{eq:a-fourth} and \eqref{eq:T-variance} yields
\begin{equation}\label{eq:Y-moments}
\E[Y_1^2Y_2^2]
=\frac{3}{5}-\frac{\operatorname{Var}(T)}{4},\qquad
\E[Y_1^4]
=\frac{9}{5}+\frac{3\operatorname{Var}(T)}{2}.
\end{equation}
Substituting \eqref{eq:Y-moments} into \eqref{eq:b-fourth} gives
\begin{equation}\label{eq:mixed-moment}
12\E[X^2Y_1^2]
=\frac{81}{5}-\E[X^4]-4\E[Y_1^4]
=9-\E[X^4]-6\operatorname{Var}(T).
\end{equation}
The left side of \eqref{eq:mixed-moment} is nonnegative, while \eqref{eq:V-properties} gives $\E[X^4]\geq\E[X^2]^2=1$.  Thus,
\begin{equation}\label{eq:moment-range}
1\leq\E[X^4]\leq9-6\operatorname{Var}(T).
\end{equation}

\medskip
\noindent\emph{Step 3: Cauchy--Schwarz.}
Since $\E[X^2]=1$ and $\E[T]=\sqrt{3}$, exchangeability and
\eqref{eq:mixed-moment} give
\begin{equation}\label{eq:covariance}
\begin{aligned}
\operatorname{Cov}(X^2,T)
&
=\frac{1}{\sqrt{3}}\sum_{j=1}^3\E[X^2Y_j^2]-\sqrt{3}=\sqrt{3}\bigl(\E[X^2Y_1^2]-1\bigr)=-\frac{\sqrt{3}}{12}
 \bigl(\E[X^4]+3+6\operatorname{Var}(T)\bigr).
\end{aligned}
\end{equation}
Using \eqref{eq:covariance} and  Cauchy--Schwarz, we have
\begin{equation}\label{eq:CS}
\bigl(\E[X^4]+3+6\operatorname{Var}(T)\bigr)^2
=48\operatorname{Cov}(X^2,T)^2
\leq48\operatorname{Var}(X^2)\operatorname{Var}(T)
=48\bigl(\E[X^4]-1\bigr)\operatorname{Var}(T).
\end{equation}
On the other hand, by \eqref{eq:moment-range}, we may apply the inequality $(p+q+4)^2>8pq,~p,q\geq 0,\,p+q\leq8$ with 
$p=\E[X^4]-1$ and $q=6\operatorname{Var}(T)$ to conclude that
\begin{equation}\label{eq:strict-reverse}
\bigl(\E[X^4]+3+6\operatorname{Var}(T)\bigr)^2
=(p+q+4)^2
>8pq
=48\bigl(\E[X^4]-1\bigr)\operatorname{Var}(T).
\end{equation}
This contradicts \eqref{eq:CS}.  Hence, the assumed random vector
cannot exist, and by using Lemma~\ref{lem:reduction} we get  
$R_{10}\notin\Scal_{10}$.
\end{proof}

\begin{proof}[Proof of Theorem~\ref{thm:classification}]
Devroye and Letac \cite{DL2015} proved that $\Scal_d=\Pcal_d$ for $d\leq9$.  Theorem~\ref{thm:d10} gives
$R_{10}\in\Pcal_{10}\setminus\Scal_{10}$, so $\Pcal_{10}\neq\Scal_{10}$. For every $d>10$, set
$R_d=\operatorname{diag}(R_{10},I_{d-10})$, which is clearly a correlation
matrix.  If $R_d$ belonged to $\Scal_d$, it
would be the covariance matrix of a random vector with uniform marginals in $\R^d$.
Its first ten coordinates would then have covariance matrix $R_{10}$ and
the same uniform marginals, so $R_{10}\in\Scal_{10}$, a
contradiction.  Hence, $\Scal_d\neq\Pcal_d$ for every $d\geq10$, which
completes the proof.
\end{proof}

\section*{Acknowledgments}

The counterexample presented in this paper was found by OpenAI's
GPT-5.6 Ultra.  The authors subsequently verified the construction and all
mathematical arguments in full. 
The authors wrote up the manuscript and  take full responsibility for its contents. GPT-5.6 further provided editing assistance.


\begin{thebibliography}{9}

\bibitem{DL2015}
L.~Devroye and G.~Letac,
\newblock Copulas with prescribed correlation matrix,
\newblock in C.~Donati-Martin, A.~Lejay, and A.~Rouault (eds.),
\emph{In Memoriam Marc Yor---S\'eminaire de Probabilit\'es XLVII},
Lecture Notes in Mathematics, vol.~2137,
Springer, Cham, 2015, pp.~585--601.
 

\bibitem{EMS2002}
P.~Embrechts, A.~J. McNeil, and D.~Straumann,
\newblock Correlation and dependence in risk management: properties and
pitfalls,
\newblock in M.~A.~H. Dempster (ed.),
\emph{Risk Management: Value at Risk and Beyond},
Cambridge University Press, Cambridge, 2002, pp.~176--223.
 

\bibitem{GPW1990}
R.~Grone, S.~Pierce, and W.~Watkins,
\newblock Extremal correlation matrices,
\newblock \emph{Linear Algebra and its Applications} 134 (1990), 63--70.
 

\bibitem{MFE15}
A.~J. McNeil, R.~Frey, and P.~Embrechts,
\newblock \emph{Quantitative Risk Management: Concepts, Techniques and Tools},
\newblock revised ed., Princeton University Press, Princeton, NJ, 2015.

\bibitem{MNS2022}
A.~J. McNeil, J.~G. Ne\v{s}lehov\'a, and A.~D. Smith,
\newblock On attainability of Kendall's tau matrices and concordance signatures,
\newblock \emph{Journal of Multivariate Analysis} 191 (2022), 105033.
 

\bibitem{WWW2019}
B.~Wang, R.~Wang, and Y.~Wang,
\newblock Compatible matrices of Spearman's rank correlation,
\newblock \emph{Statistics \& Probability Letters} 151 (2019), 67--72.
 

\bibitem{Ycart1985}
B.~Ycart,
\newblock Extreme points in convex sets of symmetric matrices,
\newblock \emph{Proceedings of the American Mathematical Society}
95 (1985), no.~4, 607--612.
 

\end{thebibliography}
\end{document}